\documentclass[12pt]{amsart}

\usepackage{amsmath,bm,amssymb,amsthm,textcomp}
\usepackage{enumitem}
\setlist[enumerate]{beginpenalty=10000}
\usepackage{mathtools}
\usepackage{hyperref}
\usepackage[numbers,sort&compress]{natbib}

\DeclareMathOperator{\dimH}{dim_H}

\theoremstyle{plain}
\newtheorem{theorem}{Theorem}[section]

\newtheorem{proposition}[theorem]{Proposition}
\newtheorem{corollary}[theorem]{Corollary}

\theoremstyle{definition}

\theoremstyle{remark}
\newtheorem{remark}[theorem]{Remark}

\numberwithin{equation}{section}

\newcommand{\I}{\mathbb{I}}
\newcommand{\Is}{\mathbb{I}^{\ast}}
\newcommand{\R}{\mathbb{R}}
\newcommand{\N}{\mathbb{N}}
\newcommand{\Q}{\mathbb{Q}}
\newcommand{\E}{\Sigma_E}
\newcommand{\Pp}{\Sigma_P}
\newcommand{\Pup}{\Sigma_P^{\uparrow}}
\newcommand{\cont}{\mathfrak c}
\newcommand{\cT}{\mathcal T}
\newcommand{\cF}{\mathcal F}
\newcommand{\cB}{\mathcal B}
\newcommand{\cD}{\mathcal D}
\newcommand{\sP}{\mathsf P}
\newcommand{\SigmaB}[1]{\boldsymbol{\Sigma}^0_{#1}}
\newcommand{\PiB}[1]{\boldsymbol{\Pi}^0_{#1}}

\allowdisplaybreaks

\begin{document}

\title[Baire-category transfer between Engel and Pierce expansions]{Baire-category transfer between Engel and Pierce expansions}

\author{Min Woong Ahn}

\address{Department of Mathematics Education, Dankook University, 152, Jukjeon-ro, Suji-gu, Yongin-si, Gyeonggi-do 16890, Republic of Korea}

\email{mwahn@dankook.ac.kr}

\keywords{Engel expansion, Pierce expansion, Baire category, Borel hierarchy, digit series, convergence exponent, Perron expansion}

\subjclass[2020]{Primary 11K55; Secondary 11A67, 26A15, 40A05, 54E52}

\begin{abstract}
In this paper, we construct an explicit correspondence between Engel and Pierce expansion digit sequences, obtained by adding $n-2$ to the $n$th Engel expansion digit, and show that it induces a homeomorphism from the set of irrationals in the unit interval onto the complement of a countable dense subset of this set. This yields a transfer principle for Baire category between the two expansions, and we determine exactly which Borel classes are preserved. We then identify this homeomorphism with the composition of two maps considered by Moroz (2027), and study the continuity of the second map, which sends modified Engel expansions to Pierce expansions. We determine its set of discontinuities, show that every discontinuity is a jump, and prove that the map is nowhere monotone. In particular, this confirms, for this map, the nowhere monotonicity and the continuity outside the exceptional set conjectured by Moroz (2027). As applications, we characterize the non-increasing position-dependent weights for which the associated series of Pierce expansion digits diverges on a comeager set, and we relate the convergence exponents of the Engel and Pierce expansion digit sequences.
\end{abstract}
\maketitle

\tableofcontents

\section{Introduction}\label{sec:intro}

Let $\I\coloneqq(0,1)\setminus\Q$ denote the set of irrational numbers in $(0,1)$. It is well known that every $x\in\I$ admits a unique Engel expansion and a unique Pierce expansion of the form
\begin{equation}\label{eq:expansions}
 x=\sum_{n=1}^{\infty}\frac{1}{e_1(x)\cdots e_n(x)}
 =\sum_{n=1}^{\infty}\frac{(-1)^{n-1}}{p_1(x)\cdots p_n(x)},
\end{equation}
where the Engel expansion digit sequence $(e_n(x))_{n\ge1}$ is non-decreasing with $e_1(x)\ge2$ and $e_n(x)\to\infty$ as $n\to\infty$, and the Pierce expansion digit sequence $(p_n(x))_{n\ge1}$ is strictly increasing. We refer the reader to \cite{Gal76} for basic facts about Engel expansions and to \cite{Pie29,Sha86} for those about Pierce expansions.

Both expansions have been extensively studied from the metric viewpoint, and the two theories have developed in parallel. For instance, the law of large numbers for Engel expansions, which states that $(e_n(x))^{1/n}\to e$ as $n\to\infty$ for Lebesgue-almost every $x\in\I$, was proved by L\'evy \cite{Lev47} (see also \cite{ERS58}). About four decades later, Shallit \cite{Sha86} established the analogous result for Pierce expansions. A similar pattern can be observed in the study of the Hausdorff dimension; compare, e.g., \cite{LW01,SW21} for Engel expansions with \cite{AhnJFG,AhnAA,AhnIJNT,LLS26} for Pierce expansions. The same holds for Baire category; compare, e.g., \cite{AhnEngel,FS24,SW21} for Engel expansions with \cite{AhnIJNT} for Pierce expansions.

Recently, Moroz \cite{Mor} explained part of this parallel at the level of the Hausdorff dimension. In the framework of Perron expansions \cite{Mor24,Mor25}, a map $\cT$ from classical Engel expansions to modified Engel expansions and a map $\cF$ from modified Engel expansions to Pierce expansions were studied in \cite{Mor}, where it was shown that $\cF$ preserves the Hausdorff dimension, while $\cT$ does so under additional growth conditions on the digits (see \cite[Theorems~4.1 and~4.5]{Mor}). Earlier, it was shown in \cite[Theorem~5.1]{Mor25} that $\cF$ preserves the Lebesgue measure. On $\I$, the composition $\cF\circ\cT$ is precisely the coordinatewise shift
\begin{equation}\label{eq:digit-shift}
 p_n=e_n+n-2\qquad(n\in\N);
\end{equation}
see Proposition~\ref{prop:moroz-composition}.

In this paper, we study the correspondence \eqref{eq:digit-shift} from a purely topological viewpoint, focusing on Baire category. Let $C\subseteq\I$ be the set of irrational numbers whose Pierce expansion digits are eventually consecutive, i.e., $p_{n+1}=p_n+1$ for all sufficiently large $n$. We show that $C$ is countable and dense in $\I$, and that the map
\[
 \Theta\colon\I\to\I,\qquad
 p_n(\Theta(x)) \coloneqq e_n(x)+n-2,
\]
is a homeomorphism of $\I$ onto $\I\setminus C$. Moreover, $\Theta$ maps each Engel fundamental interval onto the corresponding Pierce fundamental interval with $C$ removed. Consequently, Baire category results on the Engel side pass through $\Theta$ to the Pierce side, and vice versa. We also show that $\Theta$ preserves the Borel classes $\PiB{\alpha}$ for $\alpha\ge2$ and $\SigmaB{\alpha}$ for $\alpha\ge3$, and that these ranges are optimal (Theorem~\ref{thm:transfer}).

We remark that the existence of a homeomorphism between $\I$ and $\I\setminus C$ is not surprising in itself: both are non-empty zero-dimensional Polish spaces in which every compact subset has empty interior, and hence both are homeomorphic to the Baire space $\N^{\N}$ by the Alexandrov--Urysohn theorem (see \cite[Theorem~7.7]{Ke}). However, $\Theta$ acts coordinatewise on the digit sequences, and so explicit conditions on the Engel expansion digits correspond to explicit conditions on the Pierce expansion digits.

The map $\Theta$ also sheds light on the maps considered by Moroz. In \cite[Section~4.1]{Mor}, Moroz conjectured that, for every sequence $\sP$ (denoted by $P$ in \cite{Mor}) determining a pair of positive and alternating Perron expansions, the associated map $\cF_{\sP}$ is nowhere monotone, has jump discontinuities at points of a countable exceptional set $IS^{\sP^-}$, and is continuous elsewhere. Our map $\cF$ is $\cF_{\sP}$ for a particular choice of $\sP$ (see Section~\ref{sec:moroz}), and in this case, we have $IS^{\sP^-}=\Q\cap(0,1]$. Let $\cB$ be the set of right endpoints of the modified Engel fundamental intervals, which is a countable dense subset of $\Q\cap(0,1]$. We show that $\cF$ is left-continuous everywhere and nowhere monotone, that its set of discontinuities is exactly $\cB\setminus\{1\}$, and that every discontinuity is a jump whose right-hand limit is rational (Theorem~\ref{thm:moroz}). In particular, $\cF$ is continuous at every irrational point. This proves the conjectured nowhere monotonicity and continuity outside $IS^{\sP^-}$, and shows that all discontinuities are jumps at points of $IS^{\sP^-}$. Whether every point of $IS^{\sP^-}\setminus\{1\}$ is a discontinuity remains open (see Remark~\ref{rem:B}).

As applications, we consider series of the form $\sum_{n\ge1} w_n(p_n)$, where each $w_n\colon\N\to[0,\infty)$ is a function, so that the term involving the $n$th digit may depend on the position $n$ as well as on the digit itself. We show that such a series diverges on a comeager subset of $\I$ whenever $\sum_{n\ge1} w_n(n+c)=\infty$ for arbitrarily large integers $c\ge0$, and that this condition is also necessary for denseness when each $w_n$ is non-increasing (Theorem~\ref{thm:pierce-series}). In particular, taking $w_n(t)\coloneqq t^{-1}$ (see Corollary~\ref{cor:g}), we recover the fact, established in \cite[Theorem~1.13]{AhnIJNT}, that the series $\sum_{n\ge1} p_n^{-1}$ diverges on a dense $G_\delta$ subset of $\I$. Finally, we show that $\lambda_P\circ\Theta=\min\{\lambda_E,1\}$, where $\lambda_E$ and $\lambda_P$ denote the convergence exponents of the Engel and Pierce expansion digit sequences, respectively (Proposition~\ref{prop:exponent}). This explains why, in the sense of Baire category, the convergence exponent is typically $\infty$ on the Engel side \cite[Theorem~3.5]{SW21} but $1$ on the Pierce side \cite[Theorem~1.8]{AhnIJNT}.

This paper is organized as follows. In Section~\ref{sec:prelim}, we introduce the symbolic spaces for Engel and Pierce expansions, as well as modified Engel expansions and Moroz's maps $\cT$ and $\cF$. In Section~\ref{sec:main}, we state and prove the main results of this paper, namely, the Baire-category transfer (Theorem~\ref{thm:transfer}), the continuity properties of $\cF$ (Theorem~\ref{thm:moroz}), and the criteria for the comeager divergence of digit series (Theorems~\ref{thm:engel-series} and~\ref{thm:pierce-series}). In Section~\ref{sec:consequences}, we present some consequences of the main results for series of Pierce expansion digits and for convergence exponents.

Throughout the paper, we denote by $\N$ the set of positive integers and by $\N_0\coloneqq\N\cup\{0\}$ the set of non-negative integers. The set $\I$ is endowed with the subspace topology induced by the usual topology on $\R$, and, unless otherwise stated, the terms dense, meager, comeager, and nonmeager refer to $\I$. For a subset $F$ of $\R$, we denote by $|F|$ its diameter and by $\dimH F$ its Hausdorff dimension. We denote by $\cont$ the cardinality of $\R$. For a metrizable space $X$ and a countable ordinal $\alpha\ge1$, we denote by $\SigmaB{\alpha}(X)$ and $\PiB{\alpha}(X)$ the Borel classes of $X$ (see \cite[Section~11.B]{Ke}). Following the usual convention, we define $\inf\varnothing\coloneqq\infty$, and empty sums and empty products are $0$ and $1$, respectively. The logarithm $\log$ always means the natural logarithm.

\section{Preliminaries}\label{sec:prelim}

In this section, we introduce the symbolic spaces for Engel and Pierce expansions, as well as modified Engel expansions and Moroz's maps, which will be used in Section~\ref{sec:main}.

\subsection{Symbolic spaces}\label{subsec:symbolic}

Equip $\N$ with the discrete topology and $\N^{\N}$ with the product topology, and put
\begin{align*}
 \E&\coloneqq\{\eta\in\N^{\N}:2\le\eta_1\le\eta_2\le\cdots\text{ and }\eta_n\to\infty\text{ as }n\to\infty\},\\
 \Pp&\coloneqq\{\pi\in\N^{\N}:1\le\pi_1<\pi_2<\cdots\}.
\end{align*}
A finite sequence $a\coloneqq(a_1,\dotsc,a_k)$ of positive integers, where $k\in\N_0$, is called an \emph{Engel admissible word of length $k$} if $2\le a_1\le\dotsb\le a_k$, and a \emph{Pierce admissible word of length $k$} if $a_1<\dotsb<a_k$. For $\star\in\{E,P\}$ and an admissible word $a$ of length $k$ of the corresponding type, the \emph{cylinder set} associated with $a$ is defined by
\[
 [a]_\star\coloneqq\{\omega\in\Sigma_\star:\omega_j=a_j\text{ for all }1\le j\le k\}.
\]
Each cylinder set is non-empty and clopen, and the cylinder sets form a base of $\Sigma_\star$. Let $f_\star\colon\I\to\Sigma_\star$ and $\varphi_\star\colon\Sigma_\star\to\I$ denote the digit coding and evaluation maps, respectively, i.e., $f_E(x)\coloneqq(e_n(x))_{n\ge1}$ and $f_P(x)\coloneqq(p_n(x))_{n\ge1}$, and $\varphi_E$ and $\varphi_P$ are given by the two series in \eqref{eq:expansions}. We call $I_\star(a)\coloneqq\varphi_\star([a]_\star)$ the \emph{fundamental interval} associated with $a$. Note that it consists of irrational numbers only. For a Pierce admissible word $\sigma$ of length $k$, put
\[
 r(\sigma)\coloneqq\sum_{n=1}^{k}\frac{(-1)^{n-1}}{\sigma_1\cdots\sigma_n}\in\Q .
\]

In the notation of \cite{AhnEngel} and \cite{AhnJFG}, $\E$ and $\Pp$ are the sets $\Sigma_E^{\mathrm{irr}}$ and $\Sigma_\infty$, respectively, endowed with the subspace topologies. The following proposition is \cite[Corollary~3.5]{AhnEngel} for Engel expansions, and it follows from \cite[Proposition~2.3 and Lemmas~3.8 and~3.9]{AhnJFG} for Pierce expansions.

\begin{proposition}\label{prop:coding}
The digit coding maps $f_E\colon\I\to\E$ and $f_P\colon\I\to\Pp$ are homeomorphisms with inverses $\varphi_E$ and $\varphi_P$, respectively.
\end{proposition}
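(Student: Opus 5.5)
The plan is to verify four things for each $\star\in\{E,P\}$: that $f_\star$ maps $\I$ into $\Sigma_\star$, that $\varphi_\star$ maps $\Sigma_\star$ into $\I$, that the two maps are mutually inverse, and that both are continuous. Continuity of the inverse then follows from this last step rather than from any compactness argument.

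\emph{Well-definedness and bijectivity.} First I would recall the algorithms. For Engel, $e_1(x)=\lfloor 1/x\rfloor+1$ and $T_E x=e_1(x)x-1$. For Pierce, $p_1(x)=\lfloor 1/x\rfloor$ and $T_P x=1-p_1(x)x$.

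These give that $f_\star(x)\in\Sigma_\star$ for irrational $x$. For Engel, monotonicity holds and $e_n\to\infty$, since otherwise the tail would be geometric and $x$ rational. For Pierce, strict monotonicity holds; irrationality guarantees that the algorithm never terminates. Iterating the remainder identity and bounding the tail by $(e_1\cdots e_n)^{-1}$, respectively $(p_1\cdots p_{n+1})^{-1}$, shows $\varphi_\star\circ f_\star=\mathrm{id}_\I$.

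Conversely, for $\omega\in\Sigma_\star$ the series converges to some $y\in(0,1)$. I would show $y$ is irrational and that $f_\star(y)=\omega$. The route is to check that the first digit of $y$ is $\omega_1$, using the tail bounds together with the constraint $\eta_n\to\infty$ (respectively strict increase, which gives strict inequalities), and then to induct via the shift, since $T_\star\varphi_\star(\omega)=\varphi_\star(\sigma\omega)$. Irrationality follows because every rational has a finite expansion, or, in the Engel case, an expansion that is eventually constant. Uniqueness of expansions then contradicts the infinite, non-eventually-constant $\omega$.

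\emph{Continuity.} $\varphi_\star$ is continuous because the tail after $k$ digits is at most $\prod_{j\le k}\omega_j^{-1}\le 2^{-k}$ for Engel, and similarly at most $1/k!$ for Pierce by strict increase. So fixing a cylinder fixes the value up to a uniformly small error.

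For $f_\star$ it suffices that each digit $x\mapsto e_n(x)$, $p_n(x)$ is locally constant on $\I$. The map $x\mapsto\lfloor 1/x\rfloor$ is locally constant off the points $1/m$, which are rational. $T_\star$ is continuous and maps $\I$ into $\I$, on which the next digit is again locally constant. Induction on $n$ then shows each fundamental interval $I_\star(a)$ is relatively open in $\I$, which gives continuity of $f_\star$.

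I expect the main obstacle to be the converse direction, $f_\star\circ\varphi_\star=\mathrm{id}$, for Engel expansions. There the boundary case of a tail equal to a geometric series must be ruled out, and this is exactly where $\eta_n\to\infty$ is needed. I would handle it by a strict inequality $\sum_{n>k}\prod_{j=k+1}^{n}\eta_j^{-1}<1/(\eta_{k+1}-1)$, which is valid because not all later digits equal $\eta_{k+1}$. Since the paper cites \cite{AhnEngel,AhnJFG}, I would ultimately defer the details to those references.
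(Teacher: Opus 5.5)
Your sketch is correct, but it follows a genuinely different route from the paper, which gives no argument of its own. The paper quotes \cite[Corollary~3.5]{AhnEngel} and \cite[Proposition~2.3 and Lemmas~3.8 and~3.9]{AhnJFG}, where $\E$ and $\Pp$ appear as the subspaces $\Sigma_E^{\mathrm{irr}}$ and $\Sigma_\infty$ of larger symbolic spaces. You work directly on $\I$ and on infinite sequences:
\begin{itemize}
\item the remainder bounds of the two algorithms give $f_\star(\I)\subseteq\Sigma_\star$ and $\varphi_\star\circ f_\star=\mathrm{id}_{\I}$;
\item the first-digit check combined with the shift gives $f_\star\circ\varphi_\star=\mathrm{id}$;
\item the uniform tail bounds $2^{-k}$ and $1/k!$ give continuity of $\varphi_\star$;
\item local constancy of each digit on $\I$ gives continuity of $f_\star$.
\end{itemize}
Citing keeps the paper short and ties the statement to the coding developed in those references. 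Your version is self-contained and shows that nothing beyond elementary estimates for the two algorithms is needed.

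One point of bookkeeping concerns your identification of the ``main obstacle''. With your convention $e_1(x)=\lfloor 1/x\rfloor+1$, the boundary case does not affect the digit matching: $y=1/(\eta_1-1)$ still has $\lfloor 1/y\rfloor+1=\eta_1$. This is the non-terminating convention, under which rationals have eventually constant digits. So the non-strict bound $\sum_{n>k}\prod_{j=k+1}^{n}\eta_j^{-1}\le 1/(\eta_{k+1}-1)$ already gives $f_E\circ\varphi_E=\mathrm{id}$ on every non-decreasing sequence with $\eta_1\ge2$. The hypothesis $\eta_n\to\infty$ enters only in your irrationality step, exactly as you use it there. Your strict inequality is what one needs under the terminating convention $e_1(x)=\lceil 1/x\rceil$. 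There it delivers digit matching and non-termination, hence irrationality, at once. Either way the argument closes.
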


Let $\sigma$ be a Pierce admissible word of length $k$, and let $y\in I_P(\sigma)$. Since the first Pierce expansion digit of each $z\in\I$ is $\lfloor1/z\rfloor$, it follows from \eqref{eq:expansions} that
\begin{equation}\label{eq:pierce-tail}
 y=r(\sigma)+\frac{(-1)^k z}{\sigma_1\cdots\sigma_k}
 \qquad\text{and}\qquad
 \frac1{p_{k+1}(y)+1}<z<\frac1{p_{k+1}(y)},
\end{equation}
where $z$ is the element of $\I$ whose Pierce expansion digit sequence is $(p_{k+n}(y))_{n\ge1}$.

Since the cylinder sets form a base of $\Sigma_\star$ and $f_\star$ is a homeomorphism with inverse $\varphi_\star$ by Proposition~\ref{prop:coding}, the fundamental intervals $I_\star(a)$ form a base of $\I$ for each $\star\in\{E,P\}$. In particular, every non-empty open subset of $\I$ contains some $I_E(a)$ and some $I_P(\sigma)$. Moreover, $\I$, $\E$, and $\Pp$ are Polish spaces without isolated points.

\subsection{Modified Engel expansions and Moroz's maps}\label{sec:moroz}

Following Moroz \cite{Mor24,Mor}, every $x\in(0,1]$ has a unique \emph{modified Engel expansion}
\begin{equation}\label{eq:modified}
 x=\sum_{n=1}^{\infty}
 \frac{1}{(m_1-1)\cdots(m_{n-1}-1)m_n},
 \qquad 2\le m_1<m_2<\cdots .
\end{equation}
For the classical Engel expansion, Moroz uses the non-terminating convention, so that rational numbers correspond to eventually constant Engel expansion digit sequences. Accordingly, we write $e_n(x)$ for the $n$th digit of this expansion of $x\in(0,1]$; for $x\in\I$, this agrees with \eqref{eq:expansions}. Also, the Pierce expansion digits in \cite{Mor} are one larger than the traditional ones. In the notation of \cite{Mor}, \eqref{eq:modified} is the positive Perron expansion, and the Pierce expansion is the alternating Perron expansion, associated with the sequence $\sP$ of functions consisting of the constant $1$ and the maps $(x_1,\dotsc,x_n)\mapsto x_n$, $n\in\N$ (see \cite{Mor24,Mor25}). Accordingly, for this $\sP$, the maps $\cT\colon(0,1]\to(0,1]$ and $\cF\colon(0,1]\to\I$ in \cite{Mor} are given by
\[
 m_n(\cT(x))\coloneqq e_n(x)+n-1
 \qquad\text{and}\qquad
 p_n(\cF(x))\coloneqq m_n(x)-1
\]
for all $x\in(0,1]$ and $n\in\N$. Both maps are bijections, $\cT$ is continuous and strictly increasing \cite[Section~4.2]{Mor}, and $\cF=\cF_{\sP}$ in the notation of \cite[Section~4.1]{Mor}.

Let
\[
 \cB\coloneqq\{x\in(0,1]:m_{n+1}(x)=m_n(x)+1
       \text{ for all sufficiently large }n\}.
\]
A finite sequence $u \coloneqq (u_1,\dotsc,u_k)$ of integers with $2\le u_1<\dotsb<u_k$, where $k\in\N_0$, is called a \emph{modified Engel admissible word of length $k$}. For the empty word, we put $u_0\coloneqq1$. For such $u$, put
\[
 R(u)\coloneqq\prod_{j=1}^{k}\frac1{u_j-1}
 \qquad\text{and}\qquad
 s(u)\coloneqq\sum_{n=1}^{k}\frac{R(u_1,\ldots,u_{n-1})}{u_n}.
\]
Let $J(u)\coloneqq\{x\in(0,1]:m_j(x)=u_j\text{ for all }1\le j\le k\}$ be the modified Engel fundamental interval associated with $u$ (called a cylinder in \cite{Mor}). For $x\in J(u)$, it follows from \eqref{eq:modified} that $x=s(u)+R(u)\,V$, where $V$ is the value of the tail $(m_{k+1}(x),m_{k+2}(x),\ldots)$ in \eqref{eq:modified}. Note that a tail starting with $j$ is dominated term by term by the consecutive tail $j,j+1,\ldots$, whose value telescopes to $1/(j-1)$. Since $m_{k+1}(x)\ge u_k+1$, we obtain $1/m_{k+1}(x)<V\le1/u_k$, with equality exactly for the consecutive tail $u_k+1,u_k+2,\ldots$. By \cite[Proposition~2.5]{Mor}, $J(u)$ is a left-open, right-closed interval of length $R(u)/u_k$, and so
\begin{align}\label{eq:modified-cylinder}
J(u)
=\Bigl(s(u),\,s(u)+\frac{R(u)}{u_k}\Bigr], \qquad
J(u,j)
=\Bigl(s(u)+\frac{R(u)}{j},\,
        s(u)+\frac{R(u)}{j-1}\Bigr],
\end{align}
for each $j>u_k$, where the second formula is the first one applied to $(u,j)$. Thus $\cB$ is precisely the set of right endpoints of the modified Engel fundamental intervals. These endpoints are rational, and so $\cB\subseteq\Q\cap(0,1]$. Moreover, $\cB$ is countable, and it is dense in $(0,1]$ since $|J(u)|=R(u)/u_k\le1/k!$.

\section{Main results}\label{sec:main}

In this section, we state and prove the main results of this paper.

\subsection{Baire-category transfer}\label{sec:transfer}

We first construct the correspondence between Engel and Pierce expansion digit sequences. Put 
\[
\Pup\coloneqq\{\pi\in\Pp:\pi_n-n\to\infty\text{ as }n\to\infty\},
\]
and define the maps $\Phi\colon\E\to\Pp$ and $\Psi\colon\Pup\to\E$ by
\[
 \Phi(\eta)_n\coloneqq\eta_n+n-2
 \qquad\text{and}\qquad
 \Psi(\pi)_n\coloneqq\pi_n-n+2
 \qquad(n\in\N).
\]
For an Engel admissible word $a$ of length $k$, put $T(a)\coloneqq(a_j+j-2)_{j=1}^{k}$. Since $a_1-1\ge1$ and $(a_{j+1}+j-1)-(a_j+j-2)=a_{j+1}-a_j+1\ge1$, the word $T(a)$ is Pierce admissible, and we call it the Pierce admissible word \emph{paired} with $a$.

\begin{proposition}\label{prop:symbolic}
The following hold.
\begin{enumerate}[label=\textup{(\roman*)},ref=\textup{(\roman*)},leftmargin=*]
\item\label{prop:symbolic:i}
The map $\Phi$ is a homeomorphism of $\E$ onto $\Pup$ with inverse $\Psi$. For each $k\in\N_0$, the map $T$ is a bijection from the set of Engel admissible words of length $k$ onto the set of Pierce admissible words of length $k$, and
\begin{equation}\label{eq:cylinders}
 \Phi([a]_E)=[T(a)]_P\cap\Pup
\end{equation}
for each Engel admissible word $a$.

\item\label{prop:symbolic:ii}
For each $\pi\in\Pp$, the sequence $(\pi_n-n)_{n\ge1}$ is non-decreasing, and
\begin{equation}\label{eq:exceptional-tail}
 \Pp\setminus\Pup
 =\{\pi\in\Pp:\pi_{n+1}=\pi_n+1
   \text{ for all sufficiently large }n\}.
\end{equation}
Moreover, this set is countable and dense in $\Pp$.
\end{enumerate}
\end{proposition}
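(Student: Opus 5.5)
The whole proposition is elementary bookkeeping with the identity $\Phi(\eta)_n-n=\eta_n-2$, so I would verify everything coordinatewise.

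\textbf{Step 1: $\Phi$ maps $\E$ into $\Pup$.} Let $\eta\in\E$. Then $\Phi(\eta)_1=\eta_1-1\ge1$ and
\[
\Phi(\eta)_{n+1}-\Phi(\eta)_n=\eta_{n+1}-\eta_n+1\ge1,
\]
so $\Phi(\eta)\in\Pp$. Moreover $\Phi(\eta)_n-n=\eta_n-2\to\infty$, so $\Phi(\eta)\in\Pup$.

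\textbf{Step 2: $\Psi$ maps $\Pup$ into $\E$.} Let $\pi\in\Pup$. Then $\Psi(\pi)_{n+1}-\Psi(\pi)_n=\pi_{n+1}-\pi_n-1\ge0$, so $\Psi(\pi)$ is non-decreasing. It tends to infinity since $\pi_n-n\to\infty$. Finally $\Psi(\pi)_1=\pi_1+1\ge2$, so $\Psi(\pi)\in\E$.

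\textbf{Step 3: homeomorphism.} Clearly $\Phi$ and $\Psi$ are mutually inverse. Both act coordinatewise, each output coordinate depending only on the same input coordinate, so they are continuous for the product topologies. Hence $\Phi$ is a homeomorphism of $\E$ onto $\Pup$ with inverse $\Psi$.

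\textbf{Step 4: the word map $T$.} The same two computations at the level of words show that the inverse $(b_j)\mapsto(b_j-j+2)$ sends Pierce admissible words of length $k$ to Engel admissible words of length $k$. Thus $T$ is a bijection for each $k\in\N_0$. For \eqref{eq:cylinders}: if $\eta\in[a]_E$, then $\Phi(\eta)\in[T(a)]_P\cap\Pup$. Conversely, if $\pi\in[T(a)]_P\cap\Pup$, then $\Psi(\pi)\in\E$ has initial digits $a_j$, so $\pi=\Phi(\Psi(\pi))\in\Phi([a]_E)$.

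\textbf{Step 5: monotonicity and \eqref{eq:exceptional-tail}.} For $\pi\in\Pp$, we have $(\pi_{n+1}-(n+1))-(\pi_n-n)=\pi_{n+1}-\pi_n-1\ge0$, so $(\pi_n-n)$ is non-decreasing. A non-decreasing integer sequence either tends to $\infty$ or is eventually constant. Being eventually constant means exactly that $\pi_{n+1}=\pi_n+1$ for all large $n$, which gives \eqref{eq:exceptional-tail}.

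\textbf{Step 6: countability and density.} Each element of $\Pp\setminus\Pup$ is determined by a finite Pierce admissible prefix $(\pi_1,\dots,\pi_N)$, after which the digits are consecutive. There are countably many such prefixes, so the set is countable. For density, it suffices to meet every basic cylinder $[\sigma]_P$. Given $\sigma$ of length $k$, take $\pi\coloneqq(\sigma_1,\dots,\sigma_k,\sigma_k+1,\sigma_k+2,\dots)$; for $k=0$ take $(1,2,3,\dots)$. This $\pi$ lies in $[\sigma]_P\setminus\Pup$.

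There is no genuine obstacle here. The only point requiring care is the dichotomy in Step 5, which uses that $(\pi_n-n)$ is integer-valued: a non-decreasing integer sequence that does not diverge to $\infty$ is bounded and therefore eventually constant.
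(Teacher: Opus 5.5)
Your proposal is correct and follows essentially the same route as the paper. Both arguments verify the admissibility conditions coordinatewise for $\Phi$, $\Psi$, and $T$, and obtain continuity from the one-coordinate dependence. Both then use the monotone integer-valued dichotomy for $(\pi_n-n)_{n\ge1}$ and the consecutive-tail sequence $(\sigma_1,\ldots,\sigma_k,\sigma_k+1,\sigma_k+2,\ldots)$ for density. The only cosmetic difference is that you prove the cylinder identity by two inclusions via $\Psi$, whereas the paper reads it off directly from the equivalence $\Phi(\eta)_j=T(a)_j \iff \eta_j=a_j$ for $j\le k$.
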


\begin{proof}
\ref{prop:symbolic:i}
If $\eta\in\E$, then
$\Phi(\eta)_1=\eta_1-1\ge1$,
$\Phi(\eta)_{n+1}-\Phi(\eta)_n=\eta_{n+1}-\eta_n+1\ge1$, and
$\Phi(\eta)_n-n=\eta_n-2\to\infty$ as $n\to\infty$.
Conversely, if $\pi\in\Pup$, then
$\Psi(\pi)_1=\pi_1+1\ge2$,
$\Psi(\pi)_{n+1}-\Psi(\pi)_n=\pi_{n+1}-\pi_n-1\ge0$, and
$\Psi(\pi)_n\to\infty$ as $n\to\infty$.
The same computations apply to admissible words. The maps $\Phi$ and $\Psi$ are mutually inverse, and each coordinate of the output depends only on one coordinate of the input, so that both maps are continuous. Identity \eqref{eq:cylinders} holds since $\Phi(\eta)_j=T(a)_j$ for all $j\le k$ if and only if $\eta_j=a_j$ for all $j\le k$.

\ref{prop:symbolic:ii}
The monotonicity follows from $\pi_{n+1}\ge\pi_n+1$. A non-decreasing integer sequence which does not tend to infinity is eventually constant, and $(\pi_n-n)_{n\ge1}$ is eventually constant if and only if $\pi$ has an eventually consecutive tail. Such a $\pi$ is determined by a Pierce admissible word together with the index from which the increments are equal to $1$, and so the set in \eqref{eq:exceptional-tail} is countable. Every cylinder set $[\sigma]_P$ contains the sequence
\[
(\sigma_1,\ldots,\sigma_k,\sigma_k+1,\sigma_k+2,\ldots),
\]
where $\sigma_0\coloneqq0$, which proves the denseness.
\end{proof}

Define
\begin{equation}\label{eq:theta}
 C\coloneqq\varphi_P(\Pp\setminus\Pup),\qquad \Is\coloneqq\I\setminus C,\qquad
 \Theta\coloneqq\varphi_P\circ\Phi\circ f_E\colon\I\to\I .
\end{equation}
Then $p_n(\Theta(x))=e_n(x)+n-2$ for all $x\in\I$ and $n\in\N$. By Proposition~\ref{prop:moroz-composition} below, $\Theta$ coincides with the restriction to $\I$ of the composition $\cF\circ\cT$ of Moroz's maps. By \eqref{eq:exceptional-tail}, the set $C$ consists of the irrational numbers whose Pierce expansion digits are eventually consecutive, and $C$ is countable and dense in $\I$ by Propositions~\ref{prop:coding} and~\ref{prop:symbolic}\ref{prop:symbolic:ii}. Since $\I$ has no isolated points, $C$ is meager, and $\Is$ is a dense $G_\delta$ subset of $\I$.

\begin{theorem}\label{thm:transfer}
The following hold.
\begin{enumerate}[label=\textup{(\roman*)},ref=\textup{(\roman*)},leftmargin=*]

\item\label{thm:transfer:i}
The map $\Theta$ is a homeomorphism of $\I$ onto $\Is$ with inverse
$\varphi_E\circ\Psi\circ f_P|_{\Is}$, and
\begin{equation}\label{eq:real-cylinders}
 \Theta(I_E(a))=I_P(T(a))\setminus C
\end{equation}
for each Engel admissible word $a$.

\item\label{thm:transfer:ii}
Let $A\subseteq\I$. Then $A$ is dense in $\I$ if and only if
$\Theta(A)$ is dense in $\I$. The same holds if ``dense'' is replaced by
``meager'', ``comeager'', or ``nonmeager''.

\item\label{thm:transfer:iii}
Let $A\subseteq\I$, and let $\alpha$ be a countable ordinal. If $\alpha\ge2$,
then $A\in\PiB{\alpha}(\I)$ if and only if
$\Theta(A)\in\PiB{\alpha}(\I)$. If $\alpha\ge3$, then
$A\in\SigmaB{\alpha}(\I)$ if and only if
$\Theta(A)\in\SigmaB{\alpha}(\I)$. In particular, $A$ is Borel if and only
if $\Theta(A)$ is Borel.

\item\label{thm:transfer:iv}
The ranges of $\alpha$ in \ref{thm:transfer:iii} are optimal. More precisely,
$\Theta(\I)=\Is$ belongs to $\PiB2(\I)$ but not to $\SigmaB2(\I)$. Thus
$\Theta$ maps the clopen set $\I$ onto a set which is neither open, nor closed,
nor $F_\sigma$ in $\I$.

\end{enumerate}
\end{theorem}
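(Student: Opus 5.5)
Part (i) follows by composing homeomorphisms. By Proposition~\ref{prop:coding}, $f_E\colon\I\to\E$ is a homeomorphism. By Proposition~\ref{prop:symbolic}\ref{prop:symbolic:i}, $\Phi\colon\E\to\Pup$ is a homeomorphism. Since $f_P$ is a homeomorphism of $\I$ onto $\Pp$, the restriction $\varphi_P|_{\Pup}$ is a homeomorphism of $\Pup$ onto $\varphi_P(\Pup)=\I\setminus\varphi_P(\Pp\setminus\Pup)=\Is$. Composing these three maps gives that $\Theta$ is a homeomorphism onto $\Is$, with inverse $\varphi_E\circ\Psi\circ f_P|_{\Is}$. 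For \eqref{eq:real-cylinders}, I apply $\varphi_P$ to \eqref{eq:cylinders}, using that $\varphi_P$ is injective:
\[
\Theta(I_E(a))=\varphi_P([T(a)]_P\cap\Pup)=I_P(T(a))\cap\Is .
\]

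For (ii), the key facts are that $\Is$ is a dense $G_\delta$ subset of $\I$ and that $C$ is countable and meager. First, a homeomorphism $\I\to\Is$ preserves density, meagerness, comeagerness and nonmeagerness relative to the spaces $\I$ and $\Is$. Second, these notions relative to $\Is$ agree with the same notions relative to $\I$. For density this holds because $\Is$ is dense in $\I$. For meagerness, a subset of a dense subspace is meager in the subspace if and only if it is meager in the whole space; here one uses that nowhere dense sets restrict and extend correctly for dense subspaces. For comeagerness, $B\subseteq\Is$ is comeager in $\Is$ if and only if $\Is\setminus B$ is meager in $\I$. Since $C$ is meager, this is equivalent to $\I\setminus B$ being meager in $\I$. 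Nonmeagerness is the negation of meagerness.

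For (iii), I use the standard relativization $\SigmaB{\alpha}(\Is)=\{B\cap\Is:B\in\SigmaB{\alpha}(\I)\}$, and likewise for $\PiB{\alpha}$. Because $\Theta$ is a homeomorphism onto $\Is$, we have $A\in\Gamma(\I)$ if and only if $\Theta(A)\in\Gamma(\Is)$. It remains to check that, for $\Gamma$ in the stated ranges, $B\in\Gamma(\Is)$ implies $B\in\Gamma(\I)$; the converse is trivial. Now $\Is\in\PiB{2}(\I)$, being a countable intersection of cocountable open sets. If $\Gamma=\PiB{\alpha}$ with $\alpha\ge2$, or $\Gamma=\SigmaB{\alpha}$ with $\alpha\ge3$, then $\PiB2\subseteq\Gamma$ and $\Gamma$ is closed under finite intersections. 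Hence $B=B'\cap\Is$ with $B'\in\Gamma(\I)$ lies in $\Gamma(\I)$. The Borel statement then follows by taking the union over all $\alpha$.

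For (iv), $\Is\in\PiB2(\I)$ was shown above. Suppose $\Is$ were $F_\sigma$ in $\I$. Then $C$ would be a countable dense $G_\delta$ subset of $\I$. But $C$ is meager, and a meager dense $G_\delta$ set contradicts the Baire category theorem in the Polish space $\I$. So $\Is\notin\SigmaB2(\I)$, and in particular $\Is$ is neither open nor closed. Since $\Theta(\I)=\Is$ while $\I\in\SigmaB{1}\cap\PiB{1}\cap\SigmaB2$, the excluded classes are not preserved, which proves optimality. I expect the main care to be in (ii) and (iii), namely the relativization arguments, rather than in any computation.
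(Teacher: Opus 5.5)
Your proposal is correct and follows essentially the same route as the paper: you prove (i) by composing the three homeomorphisms and using injectivity of $\varphi_P$, and (ii) and (iii) by passing from $\Is$ to $\I$ via the density of $\Is$, the meagerness of $C$, and the relativization $\Gamma(\Is)=\{B\subseteq\Is:B\in\Gamma(\I)\}$ for classes containing $\PiB2$. You prove (iv) by the Baire category theorem applied to $C$, and your explicit remark that $\I\in\SigmaB1\cap\PiB1\cap\SigmaB2$ while $\Is$ is not in these classes spells out the optimality claim the paper leaves implicit.
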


\begin{proof}
\ref{thm:transfer:i}
The first assertion follows from Propositions~\ref{prop:coding}
and~\ref{prop:symbolic}\ref{prop:symbolic:i}. Since $\varphi_P$ is a
bijection, it follows from \eqref{eq:cylinders} that
\[
\Theta(I_E(a))
=\varphi_P([T(a)]_P\cap\Pup)
=I_P(T(a))\setminus C.
\]

\ref{thm:transfer:ii}
Since $\Theta\colon\I\to\Is$ is a homeomorphism, $A$ has each of the four
properties in $\I$ if and only if $\Theta(A)$ has the same property in $\Is$.
It remains to show that, for $B\subseteq\Is$, each property in $\Is$ is
equivalent to the same property in $\I$. For denseness, this holds since
$\Is$ is dense in $\I$. Note that the closure of $B$
in $\Is$ is $\Is\cap\overline{B}$, where $\overline B$ denotes the closure
of $B$ in $\I$. If $O$ is a non-empty open subset of $\I$ with
$O\subseteq\overline B$, then $O\cap\Is$ is non-empty and contained in
$\Is\cap\overline B$. Conversely, if
$O\cap\Is\subseteq\overline B$ and $O\cap\Is\ne\varnothing$, then
\[
O\subseteq\overline{O\cap\Is}\subseteq\overline B
\]
by the denseness of $\Is$. Hence $B$ is nowhere dense in $\Is$ if and only
if it is nowhere dense in $\I$. By taking countable unions, and intersecting the nowhere dense sets with $\Is$ if necessary, we see that
$B$ is meager in $\Is$ if and only if it is meager in $\I$. Since
$\I\setminus B=C\cup(\Is\setminus B)$ and $C$ is meager in $\I$, it follows
that $B$ is comeager in $\Is$ if and only if it is comeager in $\I$.
Finally, nonmeagerness is the negation of meagerness.

\ref{thm:transfer:iii}
Let $\Gamma(X)$ denote either $\PiB{\alpha}(X)$ with $\alpha\ge2$ or
$\SigmaB{\alpha}(X)$ with $\alpha\ge3$. For $B\subseteq\Is$, we have
$B\in\Gamma(\Is)$ if and only if $B=\Is\cap B'$ for some
$B'\in\Gamma(\I)$ (see \cite[Section~11.B]{Ke}). Since
$\Is\in\PiB2(\I)\subseteq\Gamma(\I)$ and $\Gamma(\I)$ is closed under finite
intersections, we have
\[
\Gamma(\Is)=\{B\subseteq\Is:B\in\Gamma(\I)\}.
\]
Since $\Theta$ is a homeomorphism of $\I$ onto $\Is$, it follows that
$A\in\Gamma(\I)$ if and only if $\Theta(A)\in\Gamma(\Is)$, which in turn
holds if and only if $\Theta(A)\in\Gamma(\I)$.

\ref{thm:transfer:iv}
We already know that $\Is\in\PiB2(\I)$. If $\Is$ were $F_\sigma$ in $\I$,
then $C$ would be a dense $G_\delta$ subset of $\I$, and hence comeager in
$\I$. But $C$ is meager in $\I$, and $\I$ is a Baire space. This is a
contradiction.
\end{proof}

For the classes $\SigmaB{1}$, $\PiB{1}$, and $\SigmaB{2}$, which are excluded in Theorem~\ref{thm:transfer}\ref{thm:transfer:iii}, only one direction survives. Indeed, since $\Theta\colon\I\to\I$ is continuous and injective, $\Theta(A)\in\Gamma(\I)$ implies $A=\Theta^{-1}(\Theta(A))\in\Gamma(\I)$ for every Borel class $\Gamma$.

\begin{remark}\label{rem:predicates}
For every $\mathcal Q\subseteq\N^{\N}$, we have
\[
 \Theta^{-1}\bigl(\{y\in\Is:(p_n(y))_{n\ge1}\in\mathcal Q\}\bigr)
 =\{x\in\I:(e_n(x)+n-2)_{n\ge1}\in\mathcal Q\}.
\]
Combining this with Theorem~\ref{thm:transfer}, we see that the Baire category of a set of irrational numbers defined by a condition on the Pierce expansion digits can be determined on the Engel side, and vice versa. Since the countable set $C$ is meager, it never affects meagerness, comeagerness, or nonmeagerness; it needs to be examined separately only when denseness or membership in $\SigmaB{\alpha}$ or $\PiB{\alpha}$ with $\alpha\le2$ is concerned.
\end{remark}

\subsection{Continuity properties of Moroz's map}\label{subsec:moroz-main}

We first identify $\Theta$ with the composition of Moroz's maps.

\begin{proposition}\label{prop:moroz-composition}
We have
\begin{gather*}
 \cF(\cB)=C
 \qquad\text{and}\qquad
 \cF((0,1]\setminus\cB)=\Is,\\
 \cT(\Q\cap(0,1])=\cB
 \qquad\text{and}\qquad
 \cT(\I)=(0,1]\setminus\cB.
\end{gather*}
Moreover, $\cF\circ\cT|_{\I}=\Theta$.
\end{proposition}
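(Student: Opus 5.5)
The whole argument runs at the level of digit sequences, using that $\cT$ and $\cF$ are bijections defined coordinatewise. The identity $\cF\circ\cT|_{\I}=\Theta$ is immediate. For $x\in\I$ we have
\[
p_n(\cF(\cT(x)))=m_n(\cT(x))-1=e_n(x)+n-2=p_n(\Theta(x)).
\]
We must first check that $\cF(\cT(x))\in\I$; this is automatic, since $\cF$ maps into $\I$. Because $f_P$ is injective (Proposition~\ref{prop:coding}), the two points coincide.

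\textbf{Images under $\cF$.} The map $\cF$ is a bijection $(0,1]\to\I$ with $p_n(\cF(x))=m_n(x)-1$. Consecutive increments are preserved: $p_{n+1}-p_n=m_{n+1}-m_n$. Hence $m$ is eventually consecutive if and only if $(p_n(\cF(x)))$ is eventually consecutive. By \eqref{eq:exceptional-tail} and the definition of $C$, the latter holds if and only if $\cF(x)\in C$. So $x\in\cB\iff\cF(x)\in C$. Surjectivity of $\cF$ then gives $\cF(\cB)=C$ and $\cF((0,1]\setminus\cB)=\I\setminus C=\Is$.

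\textbf{Images under $\cT$.} The map $\cT$ is a bijection of $(0,1]$ with $m_{n+1}(\cT(x))-m_n(\cT(x))=e_{n+1}(x)-e_n(x)+1$. Thus $\cT(x)\in\cB$ if and only if $e_{n+1}(x)=e_n(x)$ for all large $n$, i.e.\ the Engel digit sequence is eventually constant. Under the non-terminating convention, this happens exactly when $x\in\Q\cap(0,1]$. For irrational $x$, $e_n(x)\to\infty$, so the sequence is not eventually constant. Hence $\cT(\Q\cap(0,1])\subseteq\cB$ and $\cT(\I)\subseteq(0,1]\setminus\cB$. Since $\cT$ is a bijection of $(0,1]$ and these two pairs of sets partition domain and codomain, both inclusions are equalities.

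\textbf{Main obstacle.} The only delicate point is the equivalence ``eventually constant non-terminating Engel digits $\iff$ rational'' for $x\in(0,1]$, in the direction that every rational has an eventually constant digit sequence. I would take this from the standard theory (\cite{Gal76} and Moroz's convention), where a rational number's terminating expansion $(a_1,\dots,a_k)$ is replaced by $(a_1,\dots,a_{k-1},a_k+1,a_k+1,\dots)$, together with $x=1\mapsto(2,2,\dots)$. Conversely, an eventually constant tail $c,c,\dots$ sums to a geometric series, which is rational. Everything else is bookkeeping with bijectivity.
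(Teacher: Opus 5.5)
Your proposal is correct and follows essentially the paper's argument. Both compare digit increments, $p_{n+1}-p_n=m_{n+1}-m_n$ for $\cF$ and $m_{n+1}-m_n=e_{n+1}-e_n+1$ for $\cT$; both use the fact that, under Moroz's non-terminating convention, eventually constant Engel digits characterize rationals; and both read off $\cF\circ\cT|_{\I}=\Theta$ from $p_n(\cF(\cT(x)))=e_n(x)+n-2$, with you spelling out the bijectivity bookkeeping that the paper leaves implicit.
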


\begin{proof}
Note that the Pierce expansion digits of $\cF(x)$ are $m_n(x)-1$, and so they are eventually consecutive if and only if the modified Engel expansion digits of $x$ are. Similarly, $(e_n(x)+n-1)_{n\ge1}$ is eventually consecutive if and only if $(e_n(x))_{n\ge1}$ is eventually constant, which, under Moroz's convention, is equivalent to $x\in\Q$. The last identity follows from $p_n(\cF(\cT(x)))=e_n(x)+n-2$.
\end{proof}

Recall that the exceptional set $IS^{\sP^-}$ in Moroz's conjecture \cite[Section~4.1]{Mor} is the countable set of numbers in $(0,1]$ without an alternating Perron representation (see \cite[Section~2]{Mor}). For our pair of expansions, the range of $\cF$ is $\I$, and so $IS^{\sP^-}=\Q\cap(0,1]$. Moroz \cite{Mor26} obtained necessary and sufficient conditions for the convergence of sequences in terms of positive and alternating Perron expansions, which make it possible to verify the continuity of functions defined in terms of these expansions. For the present pair of expansions, we give a direct proof. The following theorem proves the conjectured nowhere monotonicity and continuity outside $IS^{\sP^-}$, and shows that the set of discontinuities is exactly $\cB\setminus\{1\}\subseteq IS^{\sP^-}$ and that every discontinuity is a jump.

\begin{theorem}\label{thm:moroz}
The following hold.
\begin{enumerate}[label=\textup{(\roman*)},ref=\textup{(\roman*)},leftmargin=*]

\item\label{thm:moroz:i}
The map $\cF$ is left-continuous on $(0,1]$ and is continuous at every point of $\bigl((0,1]\setminus\cB\bigr)\cup\{1\}$.

\item\label{thm:moroz:ii}
If $b\in\cB\setminus\{1\}$, then $\lim_{t\downarrow b}\cF(t)$ exists in $\R$ and is a rational endpoint of a Pierce fundamental interval. Since $\cF(b)$ is irrational, $\cF$ has a jump discontinuity at $b$.

\item\label{thm:moroz:iii}
The map $\cF$ is not monotone on any non-degenerate subinterval of $(0,1]$.

\end{enumerate}
Consequently, the set of discontinuities of $\cF$ is the countable dense set $\cB\setminus\{1\}\subseteq\Q\cap(0,1)$. In particular, $\cF$ is continuous at every irrational point of $(0,1)$, and at $1$ with respect to the relative topology.
\end{theorem}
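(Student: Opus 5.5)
The plan is to work through the symbolic descriptions: $J(u)$ on the modified Engel side, and $I_P(\sigma)$ with Proposition~\ref{prop:coding} on the Pierce side. By construction $\cF$ maps $J(u)$ into $I_P(u-1)$, where $u-1\coloneqq(u_1-1,\dotsc,u_k-1)$. The fundamental intervals $I_P(\sigma)$ form a base of $\I$. So $\cF$ is continuous at $x$ from a given side as long as, for every $k$, the points $t$ near $x$ on that side lie in $J(m_1(x),\dotsc,m_k(x))$.

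For \ref{thm:moroz:i}, I will use \eqref{eq:modified-cylinder}. Each $J(u)$ is left-open and right-closed, so every $x\in(0,1]$ is an interior point or a right endpoint of $J(m_1(x),\dotsc,m_k(x))$. Hence $(x-\delta,x]\subseteq J(\ldots)$ for small $\delta$, which gives left-continuity. If $x\notin\cB$, then $x$ is not the right endpoint of any $J(u)$, since $\cB$ is exactly the set of right endpoints. So $x$ is interior to every level-$k$ cylinder containing it, and $\cF$ is continuous at $x$. At $x=1$ only left neighbourhoods matter.

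For \ref{thm:moroz:ii}, let $b\in\cB\setminus\{1\}$. I will take the minimal word $u$ of length $k$ such that $b$ is the right endpoint of $J(u)$. Then $b$ has tail $u_k+1,u_k+2,\ldots$, and $u$ is nonempty because $b\neq1$. I need to find which cylinders lie immediately to the right of $b$. By \eqref{eq:modified-cylinder}, $b=s(u)+R(u)/u_k=s(u')+R(u')/(u_k-1)$ with $u'=(u_1,\dotsc,u_{k-1})$, so $b$ is the left endpoint of $J(u',u_k-1)$ when $u_k-1>u_{k-1}$. Points $t\downarrow b$ then lie in $J(u',u_k-1,j)$ with $j\to\infty$. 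If instead $u_k-1=u_{k-1}$, then $b$ is also a right endpoint at a shorter level, which contradicts minimality; I need to check this carefully. Under $\cF$, such $t$ have Pierce digits $(u'-1,u_k-2,j-1,\ldots)$ with $j\to\infty$. By \eqref{eq:pierce-tail} the tail factor $z<1/(j-1)\to0$, so $\cF(t)\to r(u'-1,u_k-2)$, which is rational. This rational is an endpoint of $I_P(u'-1,u_k-2)$. The edge case $u_k-2=u_{k-1}-1$ is excluded in the same way, and $u_k-2\ge1$ needs checking when $k=1$. Since $\cF(b)\in\I$, the right limit differs from $\cF(b)$, so the discontinuity is a jump. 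This identification of the right neighbour cylinder is the step I expect to be the main obstacle.

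For \ref{thm:moroz:iii}, take any interval. It contains some $J(u)$, and inside it the subcylinders $J(u,j)$ for $j>u_k$ are ordered decreasingly in position as $j$ increases. Their images lie in $I_P(u-1,j-1)$, and by \eqref{eq:pierce-tail} these are ordered monotonically in $j$ with direction sign $(-1)^k$. The same computation one level deeper, inside $J(u,j)$ with the subcylinders $J(u,j,i)$, gives the sign $(-1)^{k+1}$. So on one level $\cF$ "increases" and on the next it "decreases", and I pick points to realise both. Concretely, I choose $t_1<t_2$ in different level-$(k+1)$ cylinders and $t_2<t_3$ inside one level-$(k+1)$ cylinder in different level-$(k+2)$ cylinders, with opposite orderings of the images. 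The final consequences then follow: the discontinuity set is $\cB\setminus\{1\}$, which is countable and dense, and $\cB\subseteq\Q$.
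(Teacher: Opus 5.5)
Your proposal is correct and follows essentially the same route as the paper. You use left-open, right-closed cylinders $J(u)$ mapped into $I_P(u-1)$ for (i), the minimal consecutive tail with $b$ as the left endpoint of the neighbouring cylinder $J(u_1,\dotsc,u_{k-1},u_k-1)$ for (ii), and the parity-alternating order of the subcylinders $J(u,j)$ via \eqref{eq:pierce-tail} for (iii); the remaining differences are cosmetic (you use that the $I_P(\sigma)$ form a base of $\I$ instead of the bound $|I_P(u-1)|<1/k!$, and three points $t_1<t_2<t_3$ instead of two pairs), and the edge cases you flagged ($u_k-1=u_{k-1}$, and $u_1\ge3$ when $k=1$) are excluded by minimality together with $b\ne1$, exactly as in the paper.
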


\begin{proof}
For a modified Engel admissible word $u$ of length $k$, write
$u-1\coloneqq(u_1-1,\ldots,u_k-1)$. Then
$\cF(J(u))\subseteq I_P(u-1)$. Moreover, if $k\ge1$, then $|I_P(u-1)|<1/k!$ by
\cite[Proposition~3.5]{AhnJFG}, since $u_j-1\ge j$ for each $j$.

\ref{thm:moroz:i}
Let $x\in(0,1]$ and $u\coloneqq(m_1(x),\ldots,m_k(x))$. By
\eqref{eq:modified-cylinder}, $J(u)$ contains an interval immediately to the left of $x$. Hence $|\cF(t)-\cF(x)|<1/k!$ for all $t<x$ sufficiently close to $x$, which proves the left-continuity. If $x\notin\cB$, then $x$ is an interior point of $J(u)$ for every such $u$, and so the same argument applies on both sides. At $x=1$, continuity on $(0,1]$ is the same as left-continuity.

\ref{thm:moroz:ii}
Let $b\in\cB\setminus\{1\}$, and let $k$ be the smallest positive integer such that $m_{n+1}(b)=m_n(b)+1$ for all $n\ge k$. Write $m_n\coloneqq m_n(b)$, and put $u\coloneqq(m_1,\ldots,m_{k-1})$ and $u'\coloneqq(m_1,\ldots,m_{k-1},m_k-1)$. Then $m_k\ge m_{k-1}+2$, where $m_0\coloneqq1$. Indeed, for $k\ge2$, this follows from the minimality of $k$, while for $k=1$, it follows from $b\ne1$, since the digits $2,3,4,\ldots$ represent~$1$. Thus $u'$ is a modified Engel admissible word. By \eqref{eq:modified-cylinder}, $b$ is the right endpoint of $J(u,m_k)$, and so
\[
b=s(u)+\frac{R(u)}{m_k-1}=s(u'),
\]
and $J(u')$ lies immediately to the right of $b$. If $t\in J(u')$, then
\[
 t-b=R(u')\,V(m_{k+1}(t),m_{k+2}(t),\ldots)
      >\frac{R(u')}{m_{k+1}(t)},
\]
where $V$ denotes the value of the modified Engel tail. Thus
$m_{k+1}(t)\to\infty$ as $t\downarrow b$. Put $\sigma\coloneqq u'-1$. Then
\eqref{eq:pierce-tail} gives
\[
 |\cF(t)-r(\sigma)|
 <\frac{1}{\sigma_1\cdots\sigma_k\,(m_{k+1}(t)-1)}
 \to0
 \quad\text{as }t\downarrow b.
\]
Hence the right-hand limit of $\cF$ at $b$ is the rational number $r(\sigma)$, whereas $\cF(b)\in\I$.

\ref{thm:moroz:iii}
Let $u$ be a modified Engel admissible word of length $\ell$, and let $u_\ell<j<j'$. By \eqref{eq:modified-cylinder}, $J(u,j')$ lies to the left of $J(u,j)$, and so $x'<x$ whenever $x\in J(u,j)$ and $x'\in J(u,j')$. For such $x$ and $x'$, the points $\cF(x)$ and $\cF(x')$ lie in $I_P(u-1,j-1)$ and $I_P(u-1,j'-1)$, respectively. We apply \eqref{eq:pierce-tail} with $k=\ell$ and $\sigma=u-1$. Then the corresponding tail values satisfy
\[
 z\in\Bigl(\frac1j,\frac1{j-1}\Bigr)
 \qquad\text{and}\qquad
 z'\in\Bigl(\frac1{j'},\frac1{j'-1}\Bigr),
\]
and $1/(j'-1)\le1/j$, so that $z>z'$. Hence
\[
 (-1)^\ell\bigl(\cF(x)-\cF(x')\bigr)
 =\frac{z-z'}{(u_1-1)\cdots(u_\ell-1)}>0 .
\]
Thus, inside $J(u)$, the map $\cF$ preserves the order of two points taken from distinct subintervals of the form $J(u,j)$ when $\ell$ is even, and reverses it when $\ell$ is odd.

Now, let $U\subseteq(0,1]$ be a non-degenerate interval. Pick a point $x$ in the interior of $U$. Since $J(m_1(x),\ldots,m_k(x))$ contains $x$ and has length at most $1/k!$, we have $J(u)\subseteq U$ for some modified Engel admissible word $u$ of some length $k$. Then $U$ also contains $J(u,j)$ for each $j>u_k$. Applying the preceding paragraph inside $J(u)$ and inside $J(u,j)$, whose associated words have lengths $k$ and $k+1$ of opposite parity, we find $t_1<t_2$ and $t_3<t_4$ in $U$ with $\cF(t_1)<\cF(t_2)$ and $\cF(t_3)>\cF(t_4)$. Therefore, $\cF$ is not monotone on $U$. This completes the proof.
\end{proof}

\begin{remark}\label{rem:B}
By Proposition~\ref{prop:moroz-composition}, we have $\cB=\cT(\Q\cap(0,1])$. It remains open whether $\cB=\Q\cap(0,1]$, or equivalently, whether every point of $IS^{\sP^-}\setminus\{1\}$ is a discontinuity of $\cF$.
\end{remark}

\begin{remark}\label{rem:general-P}
The proof of Theorem~\ref{thm:moroz}\ref{thm:moroz:i} uses only the following three facts: the cylinders of the positive expansion are left-open, right-closed intervals; $\cF_{\sP}$ maps each of them onto the cylinder of the alternating expansion with the same base; and the two cylinders have equal diameters, which tend to $0$ along nested sequences \cite[Proposition~2.5]{Mor}. Hence, for every $\sP$, the map $\cF_{\sP}$ is left-continuous on $(0,1]$ and is continuous away from the right endpoints of the positive $\sP$-cylinders. The nowhere monotonicity and the analysis of the jumps are established here only for the pair of modified Engel and Pierce expansions.
\end{remark}

\begin{remark}\label{rem:dimension}
By Proposition~\ref{prop:moroz-composition} and \cite[Theorem~4.1 and Corollary~4.4]{Mor}, we have $\dimH\Theta(A)\le\dimH A$ for every $A\subseteq\I$. Moreover, the equality holds if, for each $x\in A$, we have $e_n(x)\ge\psi(n)$ for all sufficiently large $n$, where $\psi\colon\N\to(0,\infty)$ satisfies $\sum_{n\ge1} n/\psi(n)<\infty$ (see \cite[Theorems~4.1 and~4.5]{Mor}). On the other hand, the Baire-category transfer in Theorem~\ref{thm:transfer} requires no such growth condition.
\end{remark}

\subsection{Comeager divergence of digit series}\label{sec:series}

We now apply the Baire-category transfer to series whose terms depend on both the position and the value of the digits. For functions $v_n\colon\{2,3,\ldots\}\to[0,\infty)$ and $w_n\colon\N\to[0,\infty)$, $n\in\N$, put
\begin{align*}
 \cD^E_v\coloneqq\left\{x\in\I:\sum_{n=1}^{\infty}v_n(e_n(x))=\infty\right\},\qquad
 \cD^P_w\coloneqq\left\{y\in\I:\sum_{n=1}^{\infty}w_n(p_n(y))=\infty\right\}.
\end{align*}

\begin{theorem}\label{thm:engel-series}
Let $T_v\coloneqq\{t\ge2:\sum_{n\ge1}v_n(t)=\infty\}$. Then the following hold.
\begin{enumerate}[label=\textup{(\roman*)},ref=\textup{(\roman*)},leftmargin=*]

\item\label{thm:engel-series:i}
The set $\cD^E_v$ is a $G_\delta$ subset of $\I$.

\item\label{thm:engel-series:ii}
If $T_v$ is unbounded, then $\cD^E_v$ is a dense $G_\delta$ subset of $\I$.

\item\label{thm:engel-series:iii}
Suppose that each $v_n$ is non-increasing. Then $\cD^E_v$ is dense in $\I$ if and only if it is comeager in $\I$, which in turn holds if and only if $T_v=\{2,3,\ldots\}$. Moreover, if $t_0\notin T_v$, then $\cD^E_v$ is disjoint from the fundamental interval
\[
I_E(t_0)=\{x\in\I:e_1(x)=t_0\}.
\]

\item\label{thm:engel-series:iv}
If $v_n(t)\to0$ as $t\to\infty$ for each $n\in\N$, then $\I\setminus\cD^E_v$ is dense in $\I$.

\end{enumerate}
\end{theorem}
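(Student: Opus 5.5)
The plan is to work entirely on the symbolic side via Proposition~\ref{prop:coding}. We have $\cD^E_v=\varphi_E(D)$ with $D\coloneqq\{\eta\in\E:\sum_n v_n(\eta_n)=\infty\}$, and because $f_E$ is a homeomorphism, every topological statement can be checked for $D$ inside $\E$.

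For \ref{thm:engel-series:i}, I would write
\[
D=\bigcap_{M\in\N}\bigcup_{N\in\N}\Bigl\{\eta:\sum_{n=1}^{N}v_n(\eta_n)>M\Bigr\}.
\]
Each inner set depends only on the first $N$ coordinates, so it is a union of cylinders and hence open. This shows that $D$ is $G_\delta$.

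For \ref{thm:engel-series:ii}, it suffices by \ref{thm:engel-series:i} to show density. Given a cylinder $[a]_E$ of length $k$, unboundedness of $T_v$ lets me choose $t\in T_v$ with $t\ge a_k$ (and $t\ge 2$). The sequence that starts with $a$ and then takes $t$ at positions $k+1,\dots,N$ lies in $[a]_E$. However, a constant tail is not in $\E$, so the construction must be refined. Choose increasing $t_1<t_2<\cdots$ in $T_v$ with $t_1\ge a_k$. Then choose blocks of lengths $L_i$ such that the partial sums satisfy $\sum_{n\in\text{block }i}v_n(t_i)\ge1$; this is possible because $\sum_n v_n(t_i)=\infty$, so any tail starting at a given position diverges. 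The resulting sequence is non-decreasing, tends to infinity, lies in $[a]_E$, and its series diverges. Hence $D$ is dense and $G_\delta$, and \ref{thm:engel-series:ii} follows.

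For \ref{thm:engel-series:iii}, I would argue as follows.

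First, suppose $t_0\notin T_v$, and let $x\in I_E(t_0)$. Then $e_n(x)\ge t_0$ for all $n$, so monotonicity gives $v_n(e_n(x))\le v_n(t_0)$. The series is therefore dominated by the convergent series $\sum_n v_n(t_0)$. This proves the "moreover" statement, and it also shows that $\cD^E_v$ is not dense whenever $T_v\neq\{2,3,\dots\}$.

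Conversely, if $T_v=\{2,3,\ldots\}$, then $T_v$ is unbounded, and \ref{thm:engel-series:ii} gives comeagerness.

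Comeager implies dense because $\I$ is Baire. Dense implies $T_v$ is everything by the contrapositive just shown. This closes the cycle of equivalences.

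For \ref{thm:engel-series:iv}, given $[a]_E$, I need an $\eta\in[a]_E$ with a convergent series. I would choose the tail digits inductively, taking $\eta_n\ge\eta_{n-1}$ large enough that $v_n(\eta_n)<2^{-n}$ and $\eta_n\ge n$. This is possible because $v_n(t)\to0$ as $t\to\infty$. The resulting sequence lies in $\E$, and the sum converges.

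The only delicate point is the block construction in \ref{thm:engel-series:ii}, where the requirement $\eta_n\to\infty$ forces the use of infinitely many distinct elements of $T_v$ rather than a single one. The construction above handles this.
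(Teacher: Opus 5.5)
Your proof is correct. Parts (i), (iii) and (iv) follow the paper's argument essentially verbatim. Working in $\E$ instead of $\I$ is immaterial by Proposition~\ref{prop:coding}. In (iv) you should also impose $\eta_1\ge2$ when the cylinder is the empty word; the paper handles this with the convention $a_0\coloneqq2$, and the fix is cosmetic.

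The real difference is in (ii).

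\textbf{The paper's route.} It shows that each open set $U_M=\bigcup_N\{x:S_N(x)>M\}$ is dense. Given $I_E(a)$, it picks a single $t\in T_v$ with $t\ge a_K$ and the \emph{finite} admissible word $(a_1,\dots,a_K,t,\dots,t)$, with enough copies of $t$ to force $S_{K+L}>M$. The density of $\cD^E_v=\bigcap_M U_M$ then comes from the Baire category theorem. Since only finite words occur, the requirement $\eta_n\to\infty$ never has to be addressed.

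\textbf{Your route.} You exhibit, in each cylinder, an explicit point of $D$. You concatenate infinitely many blocks with strictly increasing values $t_1<t_2<\cdots$ from $T_v$, each block contributing at least $1$ to the series. This proves that $D$ itself is dense without invoking the Baire category theorem, at the cost of the block construction.

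\textbf{Comparison.} Both routes rely only on the unboundedness of $T_v$ and reach the same conclusion. Your closing remark, that $\eta_n\to\infty$ forces infinitely many distinct elements of $T_v$, holds only for your direct construction, not for the statement itself. The paper's argument needs just one $t$ for each pair $(a,M)$.

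\textbf{A small simplification in (iii).} Your cycle of implications ($T_v=\{2,3,\ldots\}$ $\Rightarrow$ comeager $\Rightarrow$ dense $\Rightarrow$ $T_v=\{2,3,\ldots\}$) makes the paper's observation that $T_v$ is an initial segment unnecessary.
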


\begin{proof}
\ref{thm:engel-series:i}
The partial sum
\[
S_N(x)\coloneqq\sum_{n=1}^{N}v_n(e_n(x))
\]
depends only on the first $N$ digits of $x$, and so it is locally constant, and hence continuous, on $\I$. Since the terms are non-negative, we have
\[
 \cD^E_v=\bigcap_{M\in\N}U_M,
 \qquad\text{where}\quad
 U_M\coloneqq\bigcup_{N\in\N}\{x\in\I:S_N(x)>M\},
\]
and each $U_M$ is open in $\I$.

\ref{thm:engel-series:ii}
Let $M\in\N$, and let $O\subseteq\I$ be a non-empty open set. Since the fundamental intervals form a base of $\I$, we have $O\supseteq I_E(a)$ for some Engel admissible word $a \coloneqq (a_1,\dotsc,a_K)$, where we put $a_0\coloneqq2$. Choose $t\in T_v$ with $t\ge a_K$. Since the tail $\sum_{n>K}v_n(t)$ still diverges, there exists $L\in\N$ such that
\[
\sum_{n=K+1}^{K+L}v_n(t)>M.
\]
The finite sequence $a^{(L)}\coloneqq(a_1,\dotsc,a_K,t,\dotsc,t)$, with $L$ copies of $t$, is an Engel admissible word, and we have $S_{K+L}>M$ on the non-empty set $I_E(a^{(L)})\subseteq O$. Hence each $U_M$ is dense in $\I$, and \ref{thm:engel-series:ii} follows from the Baire category theorem.

\ref{thm:engel-series:iii}
Suppose that each $v_n$ is non-increasing. If $t\in T_v$ and $2\le t'\le t$, then $t'\in T_v$. Hence $T_v$ is unbounded if and only if $T_v=\{2,3,\ldots\}$. If $t_0\notin T_v$, then every $x\in I_E(t_0)$ satisfies $e_n(x)\ge t_0$ for all $n\in\N$, and hence
\[
\sum_{n\ge1}v_n(e_n(x))
\le
\sum_{n\ge1}v_n(t_0)
<\infty.
\]
Combining this with \ref{thm:engel-series:ii} and the Baire category theorem, we obtain \ref{thm:engel-series:iii}.

\ref{thm:engel-series:iv}
Let $I_E(a)$ be as in the proof of \ref{thm:engel-series:ii}. Put $t_K\coloneqq a_K$, and choose inductively, for each $n>K$, an integer
\[
t_n\ge\max\{t_{n-1},n\}
\]
with $v_n(t_n)\le2^{-n}$. Then the sequence
\[
(a_1,\ldots,a_K,t_{K+1},t_{K+2},\ldots)
\]
belongs to $\E$, and its image under $\varphi_E$ lies in $I_E(a)\setminus\cD^E_v$. This completes the proof.
\end{proof}

If $v_n=g$ for all $n\in\N$, where $g\colon\{2,3,\ldots\}\to[0,\infty)$, then $T_v=\{t\ge2:g(t)>0\}$, and so Theorem~\ref{thm:engel-series}\ref{thm:engel-series:ii} recovers \cite[Theorem~4.1]{AhnEngel}. Thus Theorem~\ref{thm:engel-series} extends \cite[Theorem~4.1]{AhnEngel} by allowing the functions $v_n$ to depend on $n$, which is needed in the proof of Theorem~\ref{thm:pierce-series}.

\begin{theorem}\label{thm:pierce-series}
Let $\Lambda_w\coloneqq\{c\in\N_0:\sum_{n\ge1}w_n(n+c)=\infty\}$. Then the following hold.
\begin{enumerate}[label=\textup{(\roman*)},ref=\textup{(\roman*)},leftmargin=*]

\item\label{thm:pierce-series:i}
The set $\cD^P_w$ is a $G_\delta$ subset of $\I$.

\item\label{thm:pierce-series:ii}
If $\Lambda_w$ is unbounded, then $\cD^P_w$ is a dense $G_\delta$ subset of $\I$.

\item\label{thm:pierce-series:iii}
Suppose that each $w_n$ is non-increasing. Then $\cD^P_w$ is dense in $\I$ if and only if it is comeager in $\I$, which in turn holds if and only if $\Lambda_w=\N_0$. Moreover, if $c_0\notin\Lambda_w$, then $\cD^P_w$ is disjoint from the fundamental interval
\[
I_P(c_0+1)=\{y\in\I:p_1(y)=c_0+1\}.
\]

\item\label{thm:pierce-series:iv}
If $w_n(t)\to0$ as $t\to\infty$ for each $n\in\N$, then $\I\setminus\cD^P_w$ is dense in $\I$.

\end{enumerate}
\end{theorem}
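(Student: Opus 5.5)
The plan is to transfer Theorem~\ref{thm:engel-series} through $\Theta$. Define $v_n(t)\coloneqq w_n(t+n-2)$ for $t\ge2$. Then $w_n(p_n(\Theta(x)))=v_n(e_n(x))$ for every $x\in\I$, so Remark~\ref{rem:predicates} gives
\[
\Theta^{-1}(\cD^P_w)=\cD^E_v,\qquad \cD^P_w\cap\Is=\Theta(\cD^E_v).
\]
Writing $t=c+2$, we have $v_n(t)=w_n(n+c)$, so $T_v=\Lambda_w+2$. In particular, $T_v$ is unbounded if and only if $\Lambda_w$ is, and $T_v=\{2,3,\ldots\}$ if and only if $\Lambda_w=\N_0$. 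If each $w_n$ is non-increasing, so is each $v_n$. If $w_n(t)\to0$ as $t\to\infty$, then $v_n(t)\to0$ as well.

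For (i), I would not transfer at all; I would repeat the argument of Theorem~\ref{thm:engel-series}\ref{thm:engel-series:i} directly. The partial sums $\sum_{n\le N}w_n(p_n(y))$ depend only on the first $N$ Pierce digits, so they are locally constant on $\I$ by Proposition~\ref{prop:coding}. Hence $\cD^P_w$ is a countable intersection of open sets. For (ii), Theorem~\ref{thm:engel-series}\ref{thm:engel-series:ii} makes $\cD^E_v$ comeager. Theorem~\ref{thm:transfer}\ref{thm:transfer:ii} then makes $\Theta(\cD^E_v)$ comeager, and so the superset $\cD^P_w$ is comeager. Together with (i), this shows $\cD^P_w$ is a dense $G_\delta$.

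For (iii), assume the $w_n$ are non-increasing. If $\Lambda_w=\N_0$, then (ii) gives comeagerness, and comeager implies dense by the Baire category theorem. For the converse and the disjointness claim, take $c_0\notin\Lambda_w$ and $y\in I_P(c_0+1)$. Since Pierce digits increase strictly by at least $1$, we get $p_n(y)\ge n+c_0$. Monotonicity then gives $\sum_n w_n(p_n(y))\le\sum_n w_n(n+c_0)<\infty$. So $\cD^P_w$ misses the non-empty open set $I_P(c_0+1)$ and is not dense. This direct argument avoids worrying about $C$, which is exactly the set where a transfer argument could fail for denseness (Remark~\ref{rem:predicates}).

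For (iv), Theorem~\ref{thm:engel-series}\ref{thm:engel-series:iv} gives that $\I\setminus\cD^E_v$ is dense. Its image $\Theta(\I\setminus\cD^E_v)=\Is\setminus\cD^P_w$ is dense in $\I$ by Theorem~\ref{thm:transfer}\ref{thm:transfer:ii}, and it is contained in $\I\setminus\cD^P_w$. Alternatively, a direct construction works: extend any Pierce word by rapidly growing digits. The only delicate point overall is keeping track of $C$. The argument avoids it by using only supersets for comeagerness and density, and a direct digit bound for non-density, so I expect no real obstacle.
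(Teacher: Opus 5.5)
Your proposal is correct and follows the paper's proof essentially step for step. You use the same substitution $v_n(t)=w_n(t+n-2)$ with $T_v=\Lambda_w+2$, the same direct $G_\delta$ argument for (i), transfer through $\Theta$ and Theorem~\ref{thm:transfer} for (ii) and (iv), and the direct bound $p_n(y)\ge n+c_0$ on $I_P(c_0+1)$ for (iii), with the only cosmetic difference being that in (ii) you transfer comeagerness and then invoke (i), whereas the paper transfers the dense $G_\delta$ property directly.
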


\begin{proof}
Define $v_n(t)\coloneqq w_n(t+n-2)$ for $t\ge2$. Then
\[
\sum_{n\ge1}w_n(p_n(\Theta(x)))
=
\sum_{n\ge1}v_n(e_n(x))
\]
for each $x\in\I$ by \eqref{eq:digit-shift}, and hence
\begin{equation}\label{eq:series-transfer}
 \Theta(\cD^E_v)=\cD^P_w\cap\Is,\qquad
 \Theta(\I\setminus\cD^E_v)=\Is\setminus\cD^P_w,
 \qquad
 T_v=\Lambda_w+2 .
\end{equation}

\ref{thm:pierce-series:i}
This is proved in the same way as Theorem~\ref{thm:engel-series}\ref{thm:engel-series:i}.

\ref{thm:pierce-series:ii}
If $\Lambda_w$ is unbounded, then so is $T_v$. By
Theorem~\ref{thm:engel-series}\ref{thm:engel-series:ii},
$\cD^E_v$ is a dense $G_\delta$ subset of $\I$, and so is
$\Theta(\cD^E_v)$ by Theorem~\ref{thm:transfer}. By
\eqref{eq:series-transfer}, $\cD^P_w$ contains a dense $G_\delta$ subset of
$\I$, and hence it is itself a dense $G_\delta$ subset of $\I$ by
\ref{thm:pierce-series:i}.

\ref{thm:pierce-series:iii}
Suppose that each $w_n$ is non-increasing. Then so is each $v_n$, and
$\Lambda_w$ is an initial segment of $\N_0$. Let $c_0\notin\Lambda_w$.
For each $y\in I_P(c_0+1)$, we have $p_n(y)\ge n+c_0$, and hence
\[
w_n(p_n(y))\le w_n(n+c_0)
\]
for all $n\in\N$, and so the series converges. Hence $\cD^P_w\cap I_P(c_0+1)=\varnothing$.
Combining this with \ref{thm:pierce-series:ii} and the Baire category theorem,
we obtain \ref{thm:pierce-series:iii}.

\ref{thm:pierce-series:iv}
In this case, $v_n(t)\to0$ as $t\to\infty$ for each $n\in\N$. By
Theorem~\ref{thm:engel-series}\ref{thm:engel-series:iv} and
Theorem~\ref{thm:transfer}, the set
\[
\Theta(\I\setminus\cD^E_v)=\Is\setminus\cD^P_w
\]
is dense in $\I$, and so is $\I\setminus\cD^P_w$. This completes the proof.
\end{proof}

The mechanism behind the shift from $t$ to $n+c$ can be seen from the proof. The slowest possible growth of Engel expansion digits is given by long constant blocks $e_n=t$, and, under \eqref{eq:digit-shift}, they correspond exactly to long consecutive blocks $p_n=n+t-2$ of Pierce expansion digits. Similarly, by \eqref{eq:real-cylinders}, we have $I_P(c_0+1)=\Theta(I_E(c_0+2))\cup(C\cap I_P(c_0+1))$ for each $c_0\in\N_0$. Thus the fundamental interval $I_P(c_0+1)$ in Theorem~\ref{thm:pierce-series}\ref{thm:pierce-series:iii} corresponds, up to points of $C$, to the fundamental interval $I_E(t_0)$ with $t_0=c_0+2$ in Theorem~\ref{thm:engel-series}\ref{thm:engel-series:iii}.

\section{Consequences}\label{sec:consequences}

In this section, we present some consequences of the main results for series of Pierce expansion digits and for convergence exponents.

\subsection{Series of Pierce expansion digits}\label{subsec:g}

By applying Theorem~\ref{thm:pierce-series} to functions which do not depend on the position $n$, we obtain the following corollary, in which no monotonicity assumption is needed.

\begin{corollary}\label{cor:g}
Let $g\colon\N\to[0,\infty)$, and put
\[
\cD_g\coloneqq\left\{y\in\I:\sum_{n\ge1}g(p_n(y))=\infty\right\}.
\]
Then the following hold.
\begin{enumerate}[label=\textup{(\roman*)},ref=\textup{(\roman*)},leftmargin=*]

\item\label{cor:g:i}
If $\sum_{n\ge1}g(n)=\infty$, then $\cD_g$ is a dense $G_\delta$ subset of $\I$ and has cardinality $\cont$. If, in addition, $g(t)\to0$ as $t\to\infty$, then $\I\setminus\cD_g$ is dense and meager in $\I$.

\item\label{cor:g:ii}
If $\sum_{n\ge1}g(n)<\infty$, then $\cD_g=\varnothing$.

\end{enumerate}
In particular, the set
\[
\left\{y\in\I:\sum_{n\ge1}p_n(y)^{-s}=\infty\right\}
\]
is a dense $G_\delta$ subset of $\I$ and has cardinality $\cont$ for each
$s\in(0,1]$, and it is empty for each $s>1$.
\end{corollary}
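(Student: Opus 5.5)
The plan is to apply Theorem~\ref{thm:pierce-series} with $w_n\coloneqq g$ for all $n\in\N$. In this case $\Lambda_g=\{c\in\N_0:\sum_{n\ge1}g(n+c)=\infty\}$. Discarding finitely many terms does not change divergence, so $\Lambda_g=\N_0$ when $\sum_{n\ge1} g(n)=\infty$, and $\Lambda_g=\varnothing$ otherwise. No monotonicity of $g$ is needed for this dichotomy.

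For part \ref{cor:g:i}, the set $\Lambda_g=\N_0$ is unbounded, so Theorem~\ref{thm:pierce-series}\ref{thm:pierce-series:ii} shows that $\cD_g$ is a dense $G_\delta$ subset of $\I$. For the cardinality, I would argue that a dense $G_\delta$ subset of the Polish space $\I$ without isolated points is itself a Polish space without isolated points. It is non-empty by the Baire category theorem. Hence it contains a Cantor set and has cardinality $\cont$, since $\I$ itself has cardinality $\cont$.

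Alternatively, one can pass through $\Theta$, since $\Theta(\cD^E_v)\subseteq\cD_g$ and $\Theta$ is injective. I would also check that isolated points cannot occur: if $\{y\}$ were open in the dense $G_\delta$ set $D$, then $y$ would be isolated in $\overline{D}=\I$, which is impossible. This gives the perfect set argument. If in addition $g(t)\to0$, then Theorem~\ref{thm:pierce-series}\ref{thm:pierce-series:iv} shows that $\I\setminus\cD_g$ is dense. It is also meager, because its complement $\cD_g$ is comeager.

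For part \ref{cor:g:ii}, I would not use monotonicity at all. Since $(p_n(y))_{n\ge1}$ is strictly increasing, it is injective. Therefore $\sum_n g(p_n(y))$ is a subseries of $\sum_{t\ge1}g(t)$ with non-negative terms, so it is bounded by $\sum_{t\ge1}g(t)<\infty$. Hence $\cD_g=\varnothing$.

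For the final claim, take $g(t)=t^{-s}$. For $s\in(0,1]$ we have $\sum_n n^{-s}=\infty$ and $g(t)\to0$, so part \ref{cor:g:i} applies. For $s>1$ the series $\sum_n n^{-s}$ converges, so part \ref{cor:g:ii} applies.

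The only mildly delicate step is the cardinality assertion. I would handle it by the standard fact that an uncountable Polish space, or equivalently a non-empty perfect Polish space, has cardinality $\cont$ (cf.\ \cite{Ke}).
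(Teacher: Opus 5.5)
Your proposal is correct and follows essentially the same route as the paper. You set $w_n\coloneqq g$, so that $\Lambda_w=\N_0$ because each shifted series is a tail of $\sum_{n\ge1}g(n)$. Theorem~\ref{thm:pierce-series}\ref{thm:pierce-series:ii} and~\ref{thm:pierce-series:iv} then give the dense $G_\delta$ claim and the density of the complement. The cardinality $\cont$ follows because a dense $G_\delta$ subset of $\I$ is a non-empty Polish space without isolated points. Part~\ref{cor:g:ii} is the same subseries argument, using that the Pierce digits are distinct.

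One minor wording point: an uncountable Polish space is not equivalent to a non-empty perfect one. The fact you actually use, that a non-empty perfect Polish space has cardinality $\cont$, is correct.
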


\begin{proof}
\ref{cor:g:i}
Put $w_n\coloneqq g$ for each $n\in\N$. Then each shifted series
\[
\sum_{n\ge1}g(n+c)
\]
is a tail of $\sum_{n\ge1}g(n)$, and so $\Lambda_w=\N_0$. Hence $\cD_g$ is a dense $G_\delta$ subset of $\I$ by
Theorem~\ref{thm:pierce-series}\ref{thm:pierce-series:ii}. Note that a dense $G_\delta$ subset of $\I$ is a Polish space \cite[Theorem~3.11]{Ke} without isolated points, and therefore has cardinality $\cont$ \cite[Section~6.A]{Ke}. If, in addition, $g(t)\to0$ as $t\to\infty$, then $\I\setminus\cD_g$ is dense in $\I$ by
Theorem~\ref{thm:pierce-series}\ref{thm:pierce-series:iv}, and it is meager in $\I$ as the complement of a dense $G_\delta$ subset.

\ref{cor:g:ii}
For each $y\in\I$, the series $\sum_{n\ge1}g(p_n(y))$ is a subseries of the convergent series $\sum_{m\ge1}g(m)$ of non-negative terms, since the Pierce expansion digits of $y$ are distinct.
\end{proof}

For $s\in(0,1]$, the last assertion of Corollary~\ref{cor:g} recovers \cite[Theorem~1.13]{AhnIJNT}, including the critical case $s=1$. Note that this cannot be obtained from convergence exponents alone, since $\lambda_P\le1$ does not decide the convergence at $s=1$. Similarly, the series $\sum_{n\ge1}\bigl((p_n+1)\log(p_n+1)\bigr)^{-1}$ diverges on a dense $G_\delta$ subset of $\I$, whereas the series $\sum_{n\ge1}\bigl((p_n+1)\log^2(p_n+1)\bigr)^{-1}$ converges at every point of $\I$. By contrast, on the Engel side, $\sum_{n\ge1}g(e_n)$ diverges on a comeager subset of $\I$ as soon as $g>0$ on an unbounded set \cite[Theorem~4.1]{AhnEngel}.

\subsection{Convergence exponents}\label{sec:exponents}

We now relate the convergence exponents of Engel and Pierce expansion digit sequences. Recall that, for a non-decreasing sequence $d \coloneqq (d_n)_{n\ge1}$ of positive integers with $d_n\to\infty$ as $n\to\infty$, the convergence exponent of $d$ is defined by
\[
 \lambda(d)\coloneqq\inf \left\{s>0:\sum_{n=1}^{\infty}d_n^{-s}<\infty \right\},
\]
and it is well known that
\begin{equation}\label{eq:exponent}
 \lambda(d)=\limsup_{n\to\infty}\frac{\log n}{\log d_n}
\end{equation}
(see \cite[pp.~25--26]{PS72}). For each $x\in\I$, we put $\lambda_E(x)\coloneqq\lambda((e_n(x))_{n\ge1})$ and $\lambda_P(x)\coloneqq\lambda((p_n(x))_{n\ge1})$. Since $p_n(x)\ge n$ for all $n\in\N$, we have $\lambda_P(x)\le1$ for all $x\in\I$.

\begin{proposition}\label{prop:exponent}
For every $x\in\I$, we have
\begin{equation}\label{eq:truncation}
 \lambda_P(\Theta(x))=\min\{\lambda_E(x),1\}.
\end{equation}
Moreover, the set $C$ in \eqref{eq:theta} satisfies $C\subseteq\{y\in\I:\lambda_P(y)=1\}$. Consequently,
\[
 \Theta\bigl(\{x\in\I:\lambda_E(x)=\alpha\}\bigr)=\{y\in\I:\lambda_P(y)=\alpha\}
\]
for each $\alpha\in[0,1)$, whereas
\[
 \Theta\bigl(\{x\in\I:\lambda_E(x)\ge1\}\bigr)=\{y\in\I:\lambda_P(y)=1\}\setminus C .
\]
\end{proposition}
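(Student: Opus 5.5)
The plan is to compute directly with the formula \eqref{eq:exponent}. Fix $x\in\I$, write $e_n\coloneqq e_n(x)$ and $p_n\coloneqq p_n(\Theta(x))=e_n+n-2$. We must show
\[
\limsup_{n\to\infty}\frac{\log n}{\log(e_n+n-2)}=\min\Bigl\{\limsup_{n\to\infty}\frac{\log n}{\log e_n},1\Bigr\}.
\]

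The upper bound comes from two easy comparisons. Since $p_n\ge n$ and $p_n\ge e_n-1$, for large $n$ we have $\log n/\log p_n\le1$. We also have $\log n/\log p_n\le \log n/\log(e_n-1)$, and $\log(e_n-1)/\log e_n\to1$ because $e_n\to\infty$. So the limsup is at most $\min\{\lambda_E(x),1\}$.

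For the lower bound we use $p_n\le e_n+n\le 2\max\{e_n,n\}$, which gives
\[
\frac{\log n}{\log p_n}\ge\frac{\log n}{\log 2+\max\{\log e_n,\log n\}}=\min\Bigl\{\frac{\log n}{\log2+\log e_n},\frac{\log n}{\log 2+\log n}\Bigr\}.
\]
We then take a subsequence $n_i$ along which $\log n_i/\log e_{n_i}$ tends to $\lambda_E(x)$; if $\lambda_E(x)=\infty$, the ratio tends to $\infty$. Since $\log2/\log e_n\to0$, the first term tends to $\lambda_E(x)$ along $n_i$, and the second term tends to $1$. This gives the lower bound $\min\{\lambda_E(x),1\}$. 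The only step needing care is the case $\lambda_E(x)=\infty$, which the $\min$ with the second term handles.

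For $C$, take $y\in C$. By \eqref{eq:exceptional-tail}, $p_n(y)=n+c$ for all large $n$ and some constant $c$. Then $\log n/\log(n+c)\to1$, so $\lambda_P(y)=1$.

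For the set identities, recall that $\Theta$ is a bijection $\I\to\Is$ by Theorem~\ref{thm:transfer}\ref{thm:transfer:i}. Combining this with \eqref{eq:truncation}, for $\alpha\in[0,1)$ we get $\Theta(\{\lambda_E=\alpha\})=\{y\in\Is:\lambda_P(y)=\alpha\}$. This equals $\{\lambda_P=\alpha\}$ because $C$ lies in $\{\lambda_P=1\}$. Likewise $\Theta(\{\lambda_E\ge1\})=\{y\in\Is:\lambda_P(y)=1\}=\{\lambda_P=1\}\setminus C$.
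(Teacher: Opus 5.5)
Your proof is correct and follows the paper's route: you compare $p_n=e_n+n-2$ with $\max\{e_n,n\}$ up to a factor $2$ inside formula \eqref{eq:exponent}, and you handle $C$ and the set identities exactly as the paper does. The paper only packages your separate upper and lower bounds into the single two-sided estimate $M_n\le p_n\le 2M_n$, where $M_n\coloneqq\max\{e_n,n\}$, which gives $0\le \log n/\log M_n-\log n/\log p_n\le \log 2/\log n$ and so avoids the subsequence argument.
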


\begin{proof}
Let $x\in\I$, and write $e_n\coloneqq e_n(x)$ and $p_n\coloneqq e_n+n-2=p_n(\Theta(x))$. For each integer $n\ge2$, put $M_n\coloneqq\max\{e_n,n\}$. Then $M_n\le p_n\le2M_n$, and hence
\[
 0\le
 \frac{\log n}{\log M_n}-\frac{\log n}{\log p_n}
 \le\frac{\log2}{\log n}.
\]
Since $\log n/\log M_n=\min\{\log n/\log e_n,1\}$, \eqref{eq:truncation} follows from \eqref{eq:exponent}. If $y\in C$, then there exists an integer $c$ such that $p_n(y)=n+c$ for all sufficiently large $n$, and so $\lambda_P(y)=1$. The two set identities follow from \eqref{eq:truncation}, since $\Theta$ is a bijection of $\I$ onto $\I\setminus C$ and $C\subseteq\{y\in\I:\lambda_P(y)=1\}$.
\end{proof}

By Proposition~\ref{prop:exponent}, for each $x\in\I$, the convergence exponent of the Pierce expansion digit sequence of $\Theta(x)$ is that of the Engel expansion digit sequence of $x$ truncated at $1$. This explains why, in the sense of Baire category, the typical value of the convergence exponent is $\infty$ for Engel expansions but $1$ for Pierce expansions. Indeed, since
\[
 \{x\in\I:\lambda_E(x)=\infty\}=\bigcap_{k\in\N}\left\{x\in\I:\sum_{n\ge1}e_n(x)^{-k}=\infty\right\},
\]
applying Theorem~\ref{thm:engel-series}\ref{thm:engel-series:ii} to $v_n(t)\coloneqq t^{-k}$ for each $k\in\N$ together with the Baire category theorem shows that this set is a dense $G_\delta$ subset of $\I$ (see also \cite[Lemma~3.8 and Theorem~3.5]{SW21}). Hence $\{y\in\I:\lambda_P(y)=1\}$ contains the dense set $\Theta(\{x\in\I:\lambda_E(x)=\infty\})$. Moreover, it is itself a $G_\delta$ subset of $\I$, since $\lambda_P\le1$ and \eqref{eq:exponent} give
\[
 \{y\in\I:\lambda_P(y)=1\}=\bigcap_{j\in\N}\bigcap_{N\ge2}\bigcup_{n\ge N}\left\{y\in\I:\frac{\log n}{\log p_n(y)}>1-\frac1j\right\}.
\]
Therefore, $\{y\in\I:\lambda_P(y)=1\}$ is a dense $G_\delta$ subset of $\I$, which recovers the case $\alpha=1$ of \cite[Theorem~1.8]{AhnIJNT}.

\end{document}